\newtheorem{thm}{Theorem}%[section]
\newtheorem{lem}[thm]{Lemma}
\newtheorem{prp}[thm]{Proposition}
\newtheorem{cor}[thm]{Corollary}
\theoremstyle{definition}
\newtheorem{prb}[thm]{Problem}
\newcommand{\Cay}{\operatorname{Cay}}
\newcommand{\Aut}{\operatorname{Aut}}
\newcommand{\Z}{\ensuremath{\mathbb Z}}
\newcommand{\N}{\ensuremath{\mathbb N}}
\newcommand{\sdr}{spanning double ray}
\title{Invariant spanning double rays in amenable groups}
\begin{document}
	\author[1]{Agelos Georgakopoulos\thanks{Supported by the European Research Council (ERC) under the European Union's Horizon 2020 research and innovation programme (grant agreement No 639046).}
	}
	\author[2]{Florian Lehner\thanks{Supported by the Austrian Science Fund (FWF), grant J 3850-N32.}
	}
	\affil[1,2]{{Mathematics Institute}\\
		{University of Warwick}\\
		{CV4 7AL, UK}}
	
	\maketitle
	
\begin{abstract}
A well-known result of Benjamini, Lyons, Peres, and Schramm	states that if $G$  is a finitely generated Cayley graph of a group $\Gamma$, then $\Gamma$ is amenable if and only if $G$  admits a $\Gamma$-invariant random spanning tree with at most two ends. We show that this is equivalent to the existence of a $\Gamma$-invariant random spanning double ray in a power of $G$.
\end{abstract}
	
	\section{Introduction}
	In this paper we study (Cayley) graphs $G$ having a random \sdr\  the distribution of which is invariant under the action of some subgroup of the automorphism group $\Aut G$ of $G$.
	
	Our first result is
	\begin{thm}
		\label{Z2}
		The edges of the square grid can be decomposed into a  random, $\Z^2$-invariant, pair of spanning double rays. Moreover, if $G$ is a connected locally finite Cayley graph of an Abelian group $\Gamma$, then $G$ admits a random $\Gamma$-invariant spanning double ray. 
	\end{thm}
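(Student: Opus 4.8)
The plan is to derive both statements from one mechanism: a deterministic, translation-equivariant rule that turns a nested sequence of finite ``boxes'' tiling $\Gamma$ into a spanning double ray, applied to a random such sequence whose law is translation-invariant. Write $\Gamma = \mathbb{Z}^d \times F$ with $F$ finite and $d \ge 1$ (if $d = 0$ there is no double ray); the torsion factor $F$ causes only minor bookkeeping, so I describe everything as if $F = 0$. Put $\Gamma_k = (2^k\mathbb{Z})^d$ and let $B_k = \{0,\dots,2^k-1\}^d$, a fundamental domain of $\Gamma_k$ chosen so that $B_{k+1}$ is tiled by the $2^d$ translates $B_k + t$, $t \in \{0, 2^k\}^d$. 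A \emph{hierarchy} is a sequence $(\mathcal{Q}_k)_{k \ge k_0}$ in which $\mathcal{Q}_k$ tiles $\Gamma$ by $\Gamma_k$-translates of $B_k$ and $\mathcal{Q}_{k+1}$ coarsens $\mathcal{Q}_k$, where $k_0$ is a fixed large starting scale. The set of hierarchies is a compact metrizable space on which $\Gamma$ acts continuously and which carries a $\Gamma$-invariant probability measure: it is naturally isomorphic to the profinite group $\varprojlim_k (\mathbb{Z}/2^k)^d$ with $\Gamma = \mathbb{Z}^d$ acting by translation through its dense embedding, so its Haar measure does the job (equivalently, choose each successive coarsening uniformly and independently over $k$). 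Let $\mathcal{H} = (\mathcal{Q}_k)_k$ be a random hierarchy with this law, and write $Q_k(v)$ for the box of $\mathcal{Q}_k$ containing $v$.

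The deterministic rule assigns to every box $Q$ that can occur a Hamiltonian path $H_Q$ of the induced subgraph $G[Q]$ with endpoints at prescribed vertices $p(Q), q(Q)$, equivariantly under $\Gamma$ and recursively: if $Q \in \mathcal{Q}_{k+1}$ has children $Q_1, \dots, Q_{2^d} \in \mathcal{Q}_k$ listed in a fixed order determined by their relative positions, then $H_Q$ is $\bigcup_i H_{Q_i}$ together with $2^d - 1$ further edges of $G$ joining $q(Q_i)$ to $p(Q_{i+1})$ — so $H_{Q_i} \subseteq H_Q$, with no edge deleted. Such a family exists once the boxes are large enough: then $G[Q]$ is connected, it has Hamiltonian paths between suitable pairs of near-corner vertices, and it contains the edges needed to link adjacent children, all because $S$ generates $\Gamma$; for $\mathbb{Z}^d$ with the standard generators this is the classical recursive ``snake'' Hamiltonian path of a grid box, and for an arbitrary generating set it follows from Hamiltonicity of connected Cayley graphs of finite abelian groups together with some care in placing the endpoints.

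Now set $R := \bigcup_k H_{Q_k(e)}$. As an increasing union of finite paths it is a connected acyclic subgraph of $G$, and it is spanning since $Q_k(e) \uparrow \Gamma$. A vertex $v$ fails to have degree $2$ in $H_{Q_k(e)}$ only if $v \in \{p(Q_k(e)), q(Q_k(e))\}$, an event of probability $O(2^{-dk})$ by the Haar randomness; hence by Borel--Cantelli, almost surely every vertex eventually has degree $2$, so $R$ is almost surely a spanning double ray. Moreover $R$ is, off a null event, an equivariant function of $\mathcal{H}$: the union $\bigcup_k H_{Q_k(v)}$ does not depend on the base vertex $v$, since any two vertices eventually lie in a common box. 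Therefore $g \cdot R$ has the same law as $R(g \cdot \mathcal{H})$, which by $\Gamma$-invariance of Haar measure has the same law as $R$; so the law of $R$ is $\Gamma$-invariant. This would establish the second assertion of the theorem.

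For the square grid $\Gamma = \mathbb{Z}^2$ the graph is $4$-regular, so a spanning double ray and its complement each use exactly two edges at every vertex, and the recursion must be run so that the complement is \emph{also} a spanning double ray. I would carry the recursion with a \emph{pair} $(H_Q, H'_Q)$ of edge-disjoint spanning subgraphs of $G[Q]$ — with $H_Q$ a Hamiltonian path and $H'_Q$ a disjoint union of a controlled number of arcs (the count $2|Q| - 2 > |E(G[Q])|$ forces $H'_Q$ to be disconnected at every finite stage) — arranged so that $H_Q \cup H'_Q$ equals $E(G[Q])$ apart from a bounded defect near $\partial Q$ that is repaired one scale up, and so that both $\bigcup_k H_{Q_k(e)}$ and $\bigcup_k H'_{Q_k(e)}$ converge to spanning double rays. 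The convenient way to force the second piece to close up correctly is to build the whole construction so that a fixed automorphism of $\mathbb{Z}^2$ — say a quarter-turn composed with a translation — interchanges $H_Q$ and $H'_Q$ at every level, which makes the two limiting double rays isomorphic and forces them to partition the edge set. The step I expect to be genuinely hardest is precisely this control of the defect edges, so that the complementary double ray is connected; the recursive family of box Hamiltonian paths for an arbitrary generating set (together with the choice of starting scale $k_0$ and the torsion factor $F$) also requires care, though it is routine for the standard grid.
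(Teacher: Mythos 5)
Your hierarchy-of-boxes scheme is a genuinely different route from the paper's (which tiles the \emph{edge set} of $\Z^2$ by two-coloured tiles, reroutes the colours along a one-ended invariant spanning tree of a sublattice, and then bootstraps to $\Z^d$ by a product construction and to torsion via coset Hamilton paths and matchings). Unfortunately the object your construction hinges on does not exist when $d\ge 2$. You need, at each scale, a single Hamiltonian path $H_{B_k}$ of $G[B_k]$ with endpoints $p,q$ such that the $2^d$ \emph{translates} of $H_{B_k}$ into the children of $B_{k+1}$, chained by single edges from $q+t_i$ to $p+t_{i+1}$, form $H_{B_{k+1}}$ with endpoints in the same relative position. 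Write $\delta=q-p$. Each connecting edge requires $\delta-(t_{i+1}-t_i)$ to lie in $S\cup S^{-1}$; since $\delta\in[-(2^k-1),2^k-1]^d$ while $t_{i+1}-t_i$ has coordinates in $\{0,\pm 2^k\}$, for large $k$ this determines $t_{i+1}-t_i$ uniquely from $\delta$ (for the standard generators of $\Z^2$, for instance, $t_{i+1}-t_i=(2^k,0)$ forces $\delta=(2^k-1,0)$, diagonal steps are impossible, etc.). Hence all $2^d-1$ inter-child steps are equal, so the children would have to form an arithmetic progression inside the $2^d$ corners $\{0,2^k\}^d$ --- impossible for $d\ge 2$, since each coordinate takes only two values. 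This is precisely why Hilbert- and Peano-type curves use \emph{rotated and reflected} copies of the sub-curve rather than translates; but admitting orientation types breaks the translation-equivariance you rely on, and in a rootless hierarchy the orientation of a box is propagated top-down from a root that does not exist, so one would need an additional choice ``at infinity'' whose invariance and consistency are not at all clear. (Your Haar-measure/Borel--Cantelli analysis of the endpoints, and the measurability of $R$, are fine; it is the deterministic family being randomized that is missing.)

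Two further gaps. First, the opening assertion of the theorem --- the partition of \emph{all} edges of the square grid into two invariant spanning double rays --- is exactly the part you leave unfinished; moreover your proposed device (a quarter-turn interchanging $H_Q$ and $H_Q'$) is not a $\Z^2$-translation, so even the $\Z^2$-invariance of the resulting pair would need a separate argument. Second, for an arbitrary finite generating set $S$ the graph $G[B_k]$ is an induced subgraph on a fundamental domain, not a Cayley graph of a finite quotient (the wrap-around edges are absent), so Hamiltonicity results for Cayley graphs of finite Abelian groups do not directly yield the traceability between prescribed endpoints that your recursion needs. For comparison, the paper sidesteps all of these issues: connectivity of its two colour classes is inherited from the one-endedness of an invariant spanning tree (whose existence is quoted from the literature), and invariance under the full group is recovered from invariance under a finite-index sublattice by averaging over coset representatives --- a trick you could also have used to simplify your torsion bookkeeping.
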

	
	A {\em spanning double ray} is a connected subgraph containing all vertices in which every vertex has exactly two neighbours.
	The {\em square grid} is the Cayley graph of $\Z^2$ with respect to a minimal generating set.
	
	\medskip
	
		Theorem~\ref{Z2}  raises the question of which Cayley graphs  admit a random spanning double ray that is invariant under the action of the underlying group, see also Problem~\ref{probBPT} below. Benjamini et al.\ \cite{group-invariant-percolation-1999} showed that amenability is a necessary condition. The fact that the square grid admits a $\Z^2$-invariant random spanning double ray was well-known, an example is the Peano UST curve, see e.g.\ \cite{LyonsBook}. 
	
	Alspach \cite{AlspachHamDecomp} conjectured that every Cayley graph of any finite Abelian group admits a decomposition of its edge set into Hamilton cycles. By \cite{ErLePiDecomp} every Cayley graph of $\mathbb Z^d$ admits a decomposition of its edge set into spanning double rays, confirming Alspach's conjecture in spirit for this class of groups. In light of this and the first part of Theorem~\ref{Z2} it is  natural to ask whether every Cayley graph of a 1-ended  Abelian group admits a random decomposition into double rays which is invariant under the group action.

	It is an open problem \cite[Problem 3]{fleisch} whether every  connected, 1-ended, finitely generated Cayley graph has a \sdr, let alone an invariant one. But it is known that a \sdr\ can always be found, if we are allowed to extend the generating set $S$ into $S\cup S^2$ \cite{ThomassenG2}; see also \cite[Corollary 6]{fleisch}. The main result of this paper implies that if we extend $S$ further into $S\cup S^2 \cup S^3$, then we can guarantee the existence of an invariant random \sdr. It holds in a more general setup of groups acting on graphs:
	
	\begin{thm}
		\label{thm:invariantspanningdoubleray}
		Let $G$ be a countably infinite, connected, amenable graph, and let $\Gamma$ be a closed subgroup of $\Aut G$ that acts transitively on $G$. Then there is a $\Gamma$-invariant random spanning double ray in $G^3$.
	\end{thm}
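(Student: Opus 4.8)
The plan is to derive the theorem from the Benjamini--Lyons--Peres--Schramm characterisation of amenability quoted in the abstract. Since $G$ is amenable and $\Gamma$ acts transitively on it, that theorem --- in the form valid for transitive graphs and their closed transitive automorphism groups --- provides a $\Gamma$-invariant random spanning tree $T$ of $G$ that almost surely has at most two ends. Everything else is a recipe which, from $T$ together with some auxiliary independent randomness introduced only to break unavoidable symmetries, produces a spanning double ray of $T^3\subseteq G^3$; it will be equivariant enough that the invariance of $T$ and of the extra randomness forces the output to be $\Gamma$-invariant.

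Suppose first that $T$ has exactly two ends. Then $T$ has a canonical \emph{spine}: the unique double ray $R=(\dots,r_{-1},r_0,r_1,\dots)$ consisting of those vertices $v$ for which $T-v$ has two infinite components. Deleting $E(R)$ splits $T$ into $R$ together with finite subtrees $B_i$ with $B_i\cap R=\{r_i\}$, and these $B_i$ partition $V(T)$. The double ray will follow $R$, detouring through each $B_i$ in turn. The combinatorial input is the classical theorem that the cube of a finite connected graph on at least three vertices is Hamilton-connected; applied to $B_i$ it gives a Hamilton path $Q_i$ of $B_i^3$ from $r_i$ to any prescribed neighbour $c_i$ of $r_i$ in $B_i$ (when $|B_i|\le 2$ this is immediate, with $c_i:=r_i$ if $B_i=\{r_i\}$). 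Since $d_T(c_i,r_{i+1})=d_T(c_i,r_i)+d_T(r_i,r_{i+1})\le 2$, the concatenation $\dots\,Q_{-1}\,c_{-1}r_0\,Q_0\,c_0r_1\,Q_1\dots$ is a spanning double ray of $T^3$. The spine is $\Gamma$-equivariantly defined, and the choices of the $c_i$ and $Q_i$ can be made using an auxiliary i.i.d.\ uniform vertex-labelling, so the whole construction is $\Gamma$-equivariant apart from the binary choice of orientation of $R$; drawing the output uniformly from the (now canonical) unordered pair of double rays obtained from the two orientations restores exact invariance.

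The harder case --- which I expect to be the main obstacle --- is that of a one-ended tree $T$. Such a tree contains no double ray at all, so there is no spine to run along, and one cannot in general choose a ``base ray'' of $T$ towards its end $\Gamma$-equivariantly. What does remain canonical is the map $p$ sending each vertex to its neighbour on the ray to the unique end, for which every vertex has only finitely many descendants (its descendants being the vertices whose ray to the end passes through it). One can still thread a spanning double ray through the resulting finite descendant-blocks so that both of its tails run off to the end, folding back near the root of the block currently being traversed; this forces jumps of length up to $3$ instead of $2$, which is exactly why the theorem is about $G^3$ rather than $G^2$. The real work lies in making this invariant. The two routes I would try are: first, to prove that an amenable transitive graph always admits a $\Gamma$-invariant spanning tree with \emph{exactly} two ends, reducing the problem to the clean case above; and second, to run a Peano-curve-type construction directly on $T$ starting from a uniformly random rotation system, where the delicate point is that the rule selecting which occurrence of each vertex to retain when one shortcuts the induced boundary walk --- essentially the crux of the classical proofs that $G^3$ is Hamilton-connected --- can be localised and made compatible with the $\Gamma$-action.
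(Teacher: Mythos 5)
Your two-ended case is essentially sound and close in spirit to the paper's construction: a canonical spine, finite bushes $B_i$ hanging off it, Hamilton paths in $B_i^3$ between the root $r_i$ and a neighbour $c_i$ (via Hamilton-connectedness of cubes), glued by edges of $T$-distance at most $2$, with i.i.d.\ labels to make the choices equivariant and a coin flip to kill the orientation symmetry. Modulo routine measurability checks, that part would go through.

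The genuine gap is the one-ended case, which you correctly identify as the hard part but do not actually prove --- and it is not a side case: for a $1$-ended amenable graph such as $\Z^2$, the invariant spanning trees supplied by Benjamini--Lyons--Peres--Schramm (e.g.\ the uniform/wired spanning tree) are one-ended almost surely, and by Timar's result one may always assume $T$ is one-ended when $G$ is. Your first proposed escape route --- upgrading to a $\Gamma$-invariant spanning tree with \emph{exactly} two ends --- is not available: such a tree has a canonical bi-infinite spine, so its existence would give an invariant distinguished double ray in $G$, which is essentially the open Problem~4.5 of Benjamini--Peres--Timar quoted in the introduction; no such upgrade is known. Your second route is the right one, but the ``delicate point'' you defer (a localised, $\Gamma$-equivariant rule deciding how the Hamilton-type path threads through each finite descendant block) is precisely the entire content of the proof. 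The paper resolves it by attaching to each vertex $v$ an independent uniform total order $<_v$ on its neighbours and then writing down an explicit \emph{local} edge-selection rule on $G^3$ (edges of types (i)--(iii) in the text, determined only by $T$ and the orders near $v$) which simultaneously, at all vertices, produces a path through the finite part $T^{\not\infty}_v$ from $v$ to a distinguished neighbour $v^\dagger$; these paths nest as $v$ moves toward the end, and their union is the double ray. Without an explicit such rule and a verification that the resulting subgraph is $2$-regular and connected (the induction on the height of $T^{\not\infty}_v$ in the paper's Lemma~\ref{lemphi}), the proof is incomplete.
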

	
	It was known that $G^3$ contains a  (deterministic) \sdr\ for every countable, connected, 1-ended \cite{sekanina} or 2-ended \cite{fleisch} graph $G$, and this fact motivated our work.
	
	A further source of motivation, and an important tool in the proof of Theorem~\ref{thm:invariantspanningdoubleray}, is a well-known result of Benjamini et al.\  \cite{group-invariant-percolation-1999} showing that amenable Cayley graphs admit invariant random spanning trees with 1 or 2 ends. In fact they proved a much more general statement (\cite[Theorem~5.3]{group-invariant-percolation-1999}), that provides probabilistic characterisations of amenability. With Theorem~\ref{thm:invariantspanningdoubleray} we can add a further probabilistic characterisation, which combined with their statement gives the following:

	\begin{cor}[{Partly \cite[Theorem~5.3]{group-invariant-percolation-1999}}] \label{corol} 
		Let $\Gamma$ be a closed unimodular subgroup of $\Aut G$ that acts transitively on a countably infinite, locally finite, connected graph $G$. Then the following conditions are equivalent:
		\begin{enumerate}
			\item \label{itam} $G$ is amenable;
			\item \label{itst} there is a $\Gamma$-invariant random spanning tree of $G$ with at most 2 ends
			a.s.;
			\item \label{itpc} there is a $\Gamma$-invariant random nonempty connected subgraph $\omega$ of $G$
			that satisfies $p_c(\omega) = 1$ with positive probability;
			\item \label{itsdr} there is a $\Gamma$-invariant random spanning double ray in $G^3$;
			\item  \label{itk} there is a $\Gamma$-invariant random spanning double ray in $G^k$ for some $k\in \N$;
			\item \label{itamGr} $\Gamma$ is amenable.
		\end{enumerate}
	\end{cor}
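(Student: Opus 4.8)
\medskip
\noindent\textbf{Proof strategy.}
The plan is to leave the equivalences among $(\ref{itam})$, $(\ref{itst})$, $(\ref{itpc})$ and $(\ref{itamGr})$ to \cite[Theorem~5.3]{group-invariant-percolation-1999} (together with the standard fact that, for a closed unimodular transitive action, the graph is amenable if and only if the acting group is), and to attach the two new conditions $(\ref{itsdr})$ and $(\ref{itk})$ to this list by establishing the cycle $(\ref{itam}) \Rightarrow (\ref{itsdr}) \Rightarrow (\ref{itk}) \Rightarrow (\ref{itam})$.

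For $(\ref{itam}) \Rightarrow (\ref{itsdr})$ I would just quote Theorem~\ref{thm:invariantspanningdoubleray}: under the hypotheses of the corollary, $G$ is a countably infinite, connected, amenable graph and $\Gamma$ is a closed subgroup of $\Aut G$ acting transitively on $G$, which is exactly what Theorem~\ref{thm:invariantspanningdoubleray} requires. The implication $(\ref{itsdr}) \Rightarrow (\ref{itk})$ is immediate, taking $k=3$. (Note that $(\ref{itsdr})$ or $(\ref{itk})$ cannot be fed into $(\ref{itpc})$ directly: a spanning double ray has $p_c=1$, but it lives in $G^k$, not in $G$, so one really does need the detour below.)

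The substantive step is $(\ref{itk}) \Rightarrow (\ref{itam})$. Here I would first transfer the structural hypotheses on $\Gamma$ from $G$ to $G^k$. Every automorphism of $G$ preserves the graph metric $d_G$, hence the relation ``$d_G\le k$'', so $\Aut G\le\Aut G^k$ and in particular $\Gamma\le\Aut G^k$; since $G$ and $G^k$ have the same vertex set, the permutation topology on $\Gamma$ is the same whether $\Gamma$ is viewed inside $\Aut G$ or inside $\Aut G^k$, so $\Gamma$ is still closed and still unimodular, it still acts transitively on $V(G^k)=V(G)$, and $G^k$ is locally finite because balls of radius $k$ in $G$ are finite. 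A $\Gamma$-invariant random spanning double ray in $G^k$ is in particular a $\Gamma$-invariant random spanning tree of $G^k$ with exactly two ends almost surely; applying the ``amenability is necessary'' direction of \cite[Theorem~5.3]{group-invariant-percolation-1999} to $G^k$ in place of $G$ then shows that $G^k$ is amenable. Finally $G$ and $G^k$ are bi-Lipschitz equivalent via the identity on vertices, since $d_{G^k}\le d_G\le k\, d_{G^k}$, and amenability is a bi-Lipschitz invariant of locally finite graphs, so $G$ is amenable, closing the cycle.

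I do not expect a real obstacle: the hard work is already contained in Theorem~\ref{thm:invariantspanningdoubleray} and in \cite{group-invariant-percolation-1999}, and what remains is bookkeeping. The only points that need a little care are checking that closedness, unimodularity and transitivity of $\Gamma$ survive the passage to $G^k$, citing \cite[Theorem~5.3]{group-invariant-percolation-1999} in exactly the form ``a nonamenable unimodular transitive graph carries no invariant random spanning tree with at most two ends'', and recalling the bi-Lipschitz (equivalently, quasi-isometry) invariance of amenability for bounded-degree graphs.
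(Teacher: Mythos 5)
Your proof is correct and takes essentially the same route as the paper: delegate the equivalence of \ref{itam}, \ref{itst}, \ref{itpc}, \ref{itamGr} to \cite[Theorem~5.3]{group-invariant-percolation-1999}, get \ref{itam} $\to$ \ref{itsdr} from Theorem~\ref{thm:invariantspanningdoubleray}, note \ref{itsdr} $\to$ \ref{itk} is trivial, and close the cycle by observing that a spanning double ray in $G^k$ is a two-ended $\Gamma$-invariant spanning tree of $G^k$. The only (harmless) divergence is in the last step: the paper goes \ref{itk} $\to$ \ref{itamGr} by applying the unimodularity-free implication \ref{itst} $\to$ \ref{itamGr} to $G^k$, whereas you go \ref{itk} $\to$ \ref{itam} via amenability of $G^k$ (which uses unimodularity on $G^k$) plus quasi-isometry invariance of amenability --- both are valid under the corollary's hypotheses, though the paper's choice has the small advantage of showing that \ref{itst} $\to$ \ref{itsdr} $\to$ \ref{itk} $\to$ \ref{itamGr} survives when unimodularity is dropped.
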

	Our contribution to this is items \ref{itsdr}, \ref{itk}. If the unimodularity condition is dropped, then the following implications still hold: \ref{itam}  $\to$ \ref{itst}  $\to$ \ref{itpc}  $\to$ \ref{itamGr} \cite[Theorem~5.3]{group-invariant-percolation-1999}, and \ref{itst}  $\to$ \ref{itsdr}  $\to$ \ref{itk}  $\to$ \ref{itamGr}: indeed, \ref{itst}  $\to$ \ref{itsdr} is Theorem~\ref{thm:invariantspanningdoubleray}, and \ref{itk}  $\to$ \ref{itamGr} is obtained by applying the implication \ref{itst}  $\to$ \ref{itamGr} on $G^k$. We do not know whether the implication \ref{itk}  $\to$ \ref{itst} holds.

	%\begin{thm}
	%\label{thm:blps}
	%Let $G$ be an amenable graph and let $\Gamma$ be a closed subgroup of $\Aut G$  that acts transitively on $G$. Then there is a $\Gamma$-invariant random spanning tree of $G$ which almost surely has at most $2$ ends. 
	%\end{thm}
	
	\medskip
	We conjecture that  Theorem \ref{thm:invariantspanningdoubleray} can be strengthened by replacing $G^3$ with $G^2$, using the techniques of \cite{ThomassenG2,fleisch} when $G$ is 2-connected. (For this, one would need to produce an invariant ladder-like spanning structure to play the role of the random spanning tree we use in our proof.) The following might be much harder:
	
	\begin{prb}[{\cite[Question~4.5]{BePeTiPer}}] \label{probBPT}
		Does every amenable, 1-ended Cayley graph $G=\Cay(\Gamma,S)$ admit a $\Gamma$-invariant spanning double ray?
	\end{prb}
	
	We suspect that Corollary~\ref{corol} still holds when the action is quasi-transitive rather than transitive; in that case, the power $G^3$ would be optimal, as one could decorate $G$ by attaching an appropriate finite tree to each vertex. If $G$ is assumed to be 2-connected, then $G^2$ should be the optimal power.
	
	\section{Proof of Theorem \ref{Z2}}
	
	\begin{figure}
		\begin{minipage}{.5\textwidth}
			\centering
			\begin{tikzpicture}[scale=.6, vertex/.style={inner sep=1pt,circle,draw,fill}]
			\pgfdeclarelayer{E}
			\pgfdeclarelayer{V}
			\pgfdeclarelayer{drawunder}
			\pgfsetlayers{drawunder,E,V}
			\foreach \i in {-7,...,7}
			{
				\foreach \j in {-5,...,5}
				{
					\begin{pgfonlayer}{V}	
					\clip (-5.5,-3.5) rectangle (4.5,4.5);
					\node[vertex] at (\i,\j){};
					\end{pgfonlayer}
					\begin{pgfonlayer}{E}	
					\clip (-5.5,-3.5) rectangle (4.5,4.5);
					\path[draw] (\i,\j)--(\i+1,\j);
					\path[draw] (\i,\j)--(\i,\j+1);
					\end{pgfonlayer}
				}
				\begin{pgfonlayer}{drawunder}	
				\path[draw,very thick] (-1,3)--(0,3)--(0,0)--(-1,0)--cycle;
				\path[draw,very thick,dotted] (-2,2)--(-2,1)--(1,1)--(1,2)--cycle;
				\end{pgfonlayer}
				\foreach \i in {-3,...,3}
				{
					\begin{pgfonlayer}{drawunder}	
					\clip (-5.5,-3.5) rectangle (4.5,4.5);
					\path[draw,dashed,gray] (-8.5,-8.5+4*\i)--(7.5,7.5+4*\i);
					\path[draw,dashed,gray] (-8.5,7.5+4*\i)--(7.5,-8.5+4*\i);
					\end{pgfonlayer}
				}
			}
			\end{tikzpicture}
			\caption{Edge tiling of $\Z^2$.}
			\label{fig:tiles}
		\end{minipage}%
		\begin{minipage}{.5\textwidth}
			\centering
			\begin{tikzpicture}[scale=.6, vertex/.style={inner sep=1pt,circle,draw,fill}]
			\pgfdeclarelayer{E}
			\pgfdeclarelayer{V}
			\pgfdeclarelayer{drawunder}
			\pgfsetlayers{drawunder,E,V}
			\foreach \i in {-7,...,7}
			{
				\foreach \j in {-5,...,5}
				{
					\begin{pgfonlayer}{V}	
					\clip (-5.5,-3.5) rectangle (4.5,4.5);
					\node[vertex] at (\i,\j){};
					\end{pgfonlayer}
					\begin{pgfonlayer}{E}	
					\clip (-5.5,-3.5) rectangle (4.5,4.5);
					\path[draw] (\i,\j)--(\i+1,\j);
					\path[draw] (\i,\j)--(\i,\j+1);
					\end{pgfonlayer}
				}
				\begin{pgfonlayer}{drawunder}	
				\path[draw,very thick] (-1,3)--(0,3)--(0,1)--(2,1)--(2,-2)--(1,-2)--(1,0)--(-1,0)--cycle;
				\path[draw,very thick,dotted] (-2,2)--(-2,1)--(0,1)--(0,-1)--(3,-1)--(3,0)--(1,0)--(1,2)--cycle;
				\end{pgfonlayer}
				\foreach \i in {-3,...,3}
				{
					\begin{pgfonlayer}{drawunder}	
					\clip (-5.5,-3.5) rectangle (4.5,4.5);
					\path[draw,dashed,gray] (-8.5,-8.5+4*\i)--(7.5,7.5+4*\i);
					\path[draw,dashed,gray] (-8.5,7.5+4*\i)--(7.5,-8.5+4*\i);
					\end{pgfonlayer}
				}
			}
			\end{tikzpicture}
			\caption{Connecting two tiles.}
			\label{fig:connect}
		\end{minipage}
	\end{figure}
	
	We first outline a procedure to turn a one-ended spanning tree of $\Z ^2$ into a colouring of the edge set of $\Z^2$ in which each colour induces a spanning double ray. For this purpose, first consider a decomposition of the edge set of $\Z^2$ into tiles, see Figure~\ref{fig:tiles} (the bold solid and dotted edges together form one tile). Note that the subgroup $\Gamma$ of $\Z ^2$ generated by $(\pm 2, \pm 2)$ acts in a natural way on the tiles. Further, 2-colour the edges of each tile as shown in  Figure~\ref{fig:tiles}: the bold solid edges form one colour class, the bold dotted edges form the other).  Colour different tiles so that the colouring is invariant under the action of $\Gamma$.
	
	View the tiles as the vertices of the Cayley graph $\Cay(\Gamma, \{(\pm 2, \pm 2)\})$ of $\Gamma$, and note that any two adjacent tiles intersect in exactly two vertices. Call these two vertices the \emph{attachment vertices} of those tiles, and call the unique cycle of length $4$ containing both of them their \emph{attachment square}. Furthermore, every tile contains four edges of each colour that are not contained in any attachment square; call those the \emph{internal edges} of the tile. Observe that we can connect the colour classes in two adjacent tiles by swapping the colours within their attachment square, see Figure~\ref{fig:connect}. It is not hard to see that colour swaps at attachment squares can be used to connect  the colour classes of multiple squares as well. In fact, the following is true:
	
	\begin{prp}
		If $T$ is a one-ended spanning tree of $\Cay(\Gamma, \{(\pm 2, \pm 2)\}) \simeq \Z^2$, then swapping the colours at all attachment squares that correspond to edges of $T$ results in a colouring where each colour class comprises a spanning double ray.
	\end{prp}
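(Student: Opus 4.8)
The plan is as follows. Write $S_0$ (resp.\ $D_0$) for the solid (resp.\ dotted) colour class before any swaps, so that $S_0$ is the union over all tiles $t$ of the solid $8$-cycle $C_t$, similarly $D_0$, and $S_0\sqcup D_0$ is the edge set of $\Z^2$. The two colours play symmetric roles under the swap operation, so it suffices to prove that after the swaps the solid class $S$ is a spanning double ray; the dotted case is identical. Since a connected, $2$-regular subgraph of the infinite graph $\Z^2$ is automatically spanning and is a double ray, it is enough to show that $S$ is connected and $2$-regular.

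First I would record the local features of the tiling, each checked directly from Figures~\ref{fig:tiles}--\ref{fig:connect} together with the translation-symmetry of the construction: (i)~every edge of $\Z^2$ lies in at most one attachment square, and every vertex lies in only finitely many of them; (ii)~for each edge $e=\{t,t'\}$ of $\Cay(\Gamma,\{(\pm2,\pm2)\})$, the attachment square $Q_e$ is a $4$-cycle whose four edges, in cyclic order, are the solid edge of $t$, the dotted edge of $t$, the solid edge of $t'$, the dotted edge of $t'$; (iii)~the vertex sets of the solid $8$-cycles $C_t$ are pairwise disjoint. By (i), swapping the colours at all attachment squares indexed by $E(T)$ amounts exactly to replacing $S_0$ by its symmetric difference with $\bigsqcup_{e\in E(T)}Q_e$, and this is well defined, each edge being moved at most once. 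By (ii), at every vertex $v$ this symmetric difference deletes one solid edge at $v$ and adds one, for each attachment square through $v$ indexed by $E(T)$; hence every vertex keeps solid-degree $2$, so $S$ is $2$-regular.

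For connectedness I would argue inductively along $T$. The model computation behind Figure~\ref{fig:connect} is: if $A$ is a single finite cycle or a single double ray, $B$ is a single finite cycle disjoint from $A$, and $Q$ is a $4$-cycle as in (ii) meeting $A$ in one edge and $B$ in one edge (necessarily two opposite edges of $Q$), with the remaining two edges of $Q$ avoiding $A\cup B$, then $A\mathbin{\triangle}B\mathbin{\triangle}Q$ is again a single cycle (if $A$ is finite) or a single double ray (if $A$ is infinite), on vertex set $V(A)\cup V(B)$. Now enumerate $E(T)=\{e_1,e_2,\dots\}$ so that each $T_n:=\{e_1,\dots,e_n\}$ spans a finite connected subtree and $e_{n+1}$ joins $T_n$ to a new tile; put $S_n:=S_0\mathbin{\triangle}\bigsqcup_{i\le n}Q_{e_i}$. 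Using (i)--(iii) and the model computation, induction on $n$ shows that $S_n$ has exactly one non-trivial component, a single cycle through precisely the vertices of the tiles in $T_n$, all other components being the untouched $8$-cycles $C_t$ with $t\notin T_n$. Finally one passes to the limit: by (i) the colour of each edge, hence the pair of solid edges at each vertex, stabilises after finitely many steps, so $S=\lim_n S_n$; and since $T$ is one-ended, for each tile $t$ there is a stage past which $t$ is never again used as the attachment point of a new leaf, so the big cycle of $S_n$ agrees with $S$ near the block $\phi^{-1}(t)$ from some point on. As $T$ spans all tiles and the big cycle of $S_n$ eventually contains any prescribed tile, $S$ is connected, hence a spanning double ray.

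I expect the genuinely delicate point to be this last step: turning the finite-stage statement into connectedness of the infinite object, i.e.\ ensuring that the growing cycles $S_n$ do not fall apart into several double rays in the limit. A clean way to handle it is to pass to the auxiliary graph $\mathcal G$ whose vertices are the arcs into which the swaps cut each block $\phi^{-1}(t)$ and whose edges are the new ``cross'' solid edges created by the swaps: one checks that $\mathcal G$ is $2$-regular, that its components correspond to those of $S$, that the step $T_n\to T_{n+1}$ merely subdivides $\mathcal G$, and that one-endedness of $T$ (via local finiteness, each tile being an attachment point at most four times) rules out a limit point at which infinitely much has been inserted; since a connected, $2$-regular, infinite graph is a double ray, this gives that $\mathcal G$, and therefore $S$, is a spanning double ray. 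Everything above is symmetric in the two colours, so the dotted class is a spanning double ray by the same argument.
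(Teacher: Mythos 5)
Your 2-regularity argument (swaps inside attachment squares preserve the colour-degree at every vertex, hence every vertex retains solid-degree $2$, which also gives spanningness) is exactly the paper's first step. The connectivity argument, however, has a genuine gap precisely at the point you flag as delicate, and the patch you propose does not close it. You correctly observe that every edge's colour, and hence the local picture at every vertex, stabilises after finitely many steps of the exhaustion $T_1\subseteq T_2\subseteq\cdots$, and that every prescribed tile eventually lies on the single big cycle of $S_n$. But this does not imply that $S=\lim_n S_n$ is connected: for two vertices $x,y$ the cycle $S_n$ offers two arcs between them, and both arcs can be modified infinitely often (each modification inserts a detour through a new tile), in which case $x$ and $y$ end up in different components of the limit. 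Your stated reason for why this cannot happen --- one-endedness ``via local finiteness, each tile being an attachment point at most four times'' --- is really only local finiteness of $\Cay(\Gamma,\{(\pm2,\pm2)\})$ and makes no essential use of the number of ends of $T$. Indeed, the same words apply verbatim to a locally finite tree with three ends (three rays glued at a root), for which the limit of the big cycles is \emph{not} connected but splits into several double rays. So the argument as written would prove a false statement, and something is missing.

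What is missing is the actual mechanism by which one-endedness enters, and it is the heart of the paper's proof: for each tile $t$, let $t^{\uparrow}$ be the neighbour of $t$ on the unique ray of $T$ starting at $t$, and let $T^{\downarrow}_t$ be the component of $T-t^{\uparrow}$ containing $t$. One-endedness (plus local finiteness and K\"onig's lemma) makes $T^{\downarrow}_t$ \emph{finite}, so only finitely many swaps ever touch its tiles; an induction on the height of $T^{\downarrow}_t$ (your ``model computation'' is exactly the induction step: replacing an edge of a cycle by a path keeps things connected) shows that in the final colouring all internal edges of the tiles of $T^{\downarrow}_t$ of a given colour are joined by a path inside those tiles. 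Connectivity of $S$ then follows because any two tiles lie in a common $T^{\downarrow}_s$ --- again a consequence of one-endedness, since the rays from any two tiles merge. In the language of your exhaustion: you must exhibit, for given $x$ and $y$, one arc of $S_n$ between them that stabilises, and the arc inside the (finite) common descendant subtree $T^{\downarrow}_s$ is the one that does. Without replacing your ``local finiteness'' justification by this (or an equivalent) use of one-endedness, the proof is incomplete.
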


\begin{figure}
\begin{minipage}{.5\textwidth}
\centering
			\begin{tikzpicture}[scale=.6, vertex/.style={inner sep=1pt,circle,draw,fill},treevertex/.style={inner sep=1.5pt,circle,draw,fill}]
			\pgfdeclarelayer{E}
			\pgfdeclarelayer{V}
			\pgfdeclarelayer{drawunder}
			\pgfdeclarelayer{drawover}
			\pgfsetlayers{drawunder,E,V,drawover}
			\foreach \i in {-7,...,7}
			{
				\foreach \j in {-5,...,5}
				{
					\begin{pgfonlayer}{V}	
					\clip (-5.5,-2.7) rectangle (4.5,5.3);
					\node[vertex] at (\i,\j){};
					\end{pgfonlayer}
					\begin{pgfonlayer}{E}	
					\clip (-5.5,-2.7) rectangle (4.5,5.3);
					\path[draw] (\i,\j)--(\i+1,\j);
					\path[draw] (\i,\j)--(\i,\j+1);
					\end{pgfonlayer}
				}
				
				\foreach \i in {-3,...,3}
				{
					\begin{pgfonlayer}{drawunder}	
					\clip (-5.5,-2.7) rectangle (4.5,5.3);
					\path[draw,dashed,gray] (-8.5,-8.5+4*\i)--(7.5,7.5+4*\i);
					\path[draw,dashed,gray] (-8.5,7.5+4*\i)--(7.5,-8.5+4*\i);
					\end{pgfonlayer}
				}
                {
					\begin{pgfonlayer}{drawover}	
					\clip (-5.5,-2.7) rectangle (4.5,5.3);
                    
					\node[fill=white,below] at (-2.5,-.5) {$t$};
					\node[fill=white,above] at (-4.5,-2.5) {$t^{\uparrow}$};
                    \foreach \i in {-1,...,1}
					{
					\foreach \j in {-1,...,1}
                    {
                    \node[treevertex] at (4*\i-.5,4*\j-2.5){};
                    \node[treevertex] at (4*\i-2.5,4*\j-.5){};
                    }
                    }
                    \path[draw,very thick, dashed] (-6.5,-4.5)--(-4.5,-2.5);
                    \path[draw,very thick,dashed] (-4.5,-2.5)--(-2.5,-0.5);
                    \path[draw,very thick] (-2.5,-0.5)--(-4.5,1.5);
                    \path[draw,very thick] (-2.5,-0.5)--(-0.5,1.5);
                    \path[draw,very thick] (-0.5,1.5)--(1.5,-0.5);
                    \path[draw,very thick] (1.5,-0.5)--(3.5,1.5);
                    \path[draw,very thick] (-2.5,3.5)--(-0.5,1.5);
                    \path[draw,very thick] (1.5,3.5)--(3.5,1.5);
                    \path[draw,very thick, dashed] (-0.5,-2.5)--(1.5,-4.5);
                    \path[draw,very thick, dashed] (3.5,-2.5)--(5.5,-0.5);
                    
					\end{pgfonlayer}
				}
			}
			\end{tikzpicture}
		\end{minipage}%
		\begin{minipage}{.5\textwidth}
        \centering
			\begin{tikzpicture}[scale=.6, vertex/.style={inner sep=1pt,circle,draw,fill}]
			\pgfdeclarelayer{E}
			\pgfdeclarelayer{V}
			\pgfdeclarelayer{drawunder}
			\pgfdeclarelayer{drawover}
			\pgfsetlayers{drawunder,E,V,drawover}
			\foreach \i in {-7,...,7}
			{
				\foreach \j in {-5,...,5}
				{
					\begin{pgfonlayer}{V}	
					\clip (-5.5,-2.7) rectangle (4.5,5.3);
					\node[vertex] at (\i,\j){};
					\end{pgfonlayer}
					\begin{pgfonlayer}{E}	
					\clip (-5.5,-2.7) rectangle (4.5,5.3);
					\path[draw] (\i,\j)--(\i+1,\j);
					\path[draw] (\i,\j)--(\i,\j+1);
					\end{pgfonlayer}
				}
				
				\foreach \i in {-3,...,3}
				{
					\begin{pgfonlayer}{drawunder}	
					\clip (-5.5,-2.7) rectangle (4.5,5.3);
					\path[draw,dashed,gray] (-8.5,-8.5+4*\i)--(7.5,7.5+4*\i);
					\path[draw,dashed,gray] (-8.5,7.5+4*\i)--(7.5,-8.5+4*\i);
					\end{pgfonlayer}
				}
                {
					\begin{pgfonlayer}{drawover}	
					\clip (-5.5,-2.7) rectangle (4.5,5.3);
                    \path[draw,very thick] (-5,-3)--(-5,-1)--(-4,-1);
                    \path[draw,very thick] (-4,-1)--(-3,-1)--(-3,0)--(-5,0)--(-5,3)--(-4,3)--(-4,1)--(-1,1)--(-1,2)--(-3,2)--(-3,5)--(-2,5)--(-2,3)--(0,3)--(0,1)--(3,1)--(3,2)--(1,2)--(1,5)--(2,5)--(2,3)--(4,3)--(4,0)--(2,0)--(2,-2)--(1,-2)--(1,0)--(-2,0)--(-2,-2)--(-3,-2);
                    \path[draw,very thick] (-3,-2)--(-4,-2)--(-4,-4);

                    \path[draw,very thick] (-5,6)--(-5,4)--(-4,4)--(-4,6);
                    \path[draw,very thick] (-1,6)--(-1,4)--(0,4)--(0,6);
                    \path[draw,very thick] (3,6)--(3,4)--(4,4)--(4,6);
                    
                    \path[draw,very thick] (-1,-3)--(-1,-1)--(0,-1)--(0,-3);
                    \path[draw,very thick] (3,-3)--(3,-1)--(5,-1);
                    \path[draw,very thick] (4,-3)--(4,-2)--(5,-2);
					\end{pgfonlayer}
				}
			}
			\end{tikzpicture}
		\end{minipage}
\caption{The tree $T_t^{\downarrow}$ and one of the corresponding paths in $\Z^2$.}
\label{fig:treepath}
\end{figure}
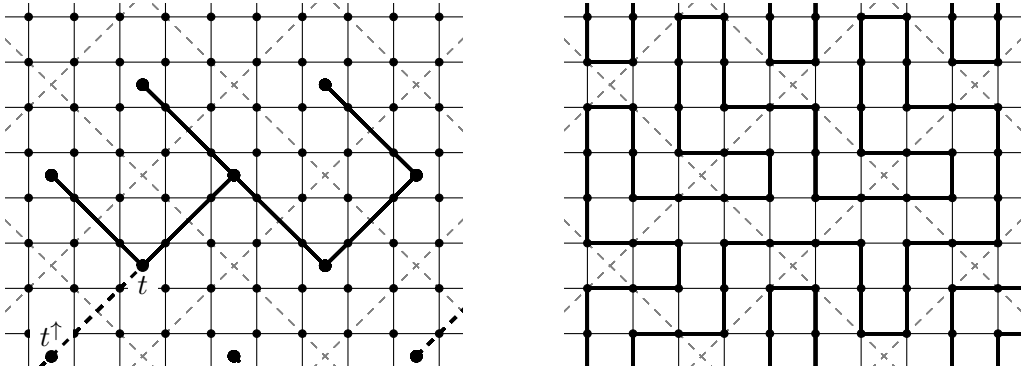
	
	\begin{proof}
		Clearly, swapping colours within any family of attachment squares does not change the degree of any vertex in either colour class. In particular, the subgraphs induced by the resulting colour classes will be $2$-regular, and it only remains to show that they are connected.
		
		For a tile $t$, let $t^{\uparrow}$ be the neighbour of $t$ that lies on the unique ray in $T$ starting at $t$ and let $T^{\downarrow}_t$ be the component of $T - t^{\uparrow}$ which contains $t$. 
		
		We claim that all internal edges of the squares in $T^{\downarrow}_t$ in each colour class are connected by a path contained in the union of the squares in $T^{\downarrow}_t$, see Figure~\ref{fig:treepath}. This is easily proved by induction. If $t$ is a leaf of $T^{\downarrow}_t$ it is obvious. If we inductively assume that it holds for the neighbours of $t$ in $T^{\downarrow}_t$ (whose maximum distance to a leaf in their respective subtree $T^{\downarrow}_s$ is smaller), then it is straightforward to see that the statement also holds for $t$ using the trivial fact that replacing an edge of a cycle by a path results in a connected graph.
		
		Finally note, that every vertex is adjacent to an internal edge of each colour (although these two internal edges might be internal to different tiles). Since for any two tiles $t$ and $t'$, there is a tile $s$ such that both $t$ and $t'$ are contained in $T^{\downarrow}_s$, this implies that each colour class is connected and thus a spanning double ray. 
	\end{proof}
	
	It is well known that $\Z^2$ admits an invariant measure on one ended spanning trees, see for example \cite{Pem91}. If we choose the tree $T$ in the above construction according to such a measure, then it is easy to see that the colour of an edge of $\mathbb Z^2$ only depends on whether or not a specific edge is contained in $T$. Hence cylinder sets are measurable and we end up with a $\Gamma$-invariant decomposition of $\Z ^2$ into spanning double rays.
	
	To finish the proof of Theorem \ref{Z2}, note that $\Gamma$ has finite index in $\mathbb Z^2$. Thus any $\Gamma$-invariant measure $\mu$ on subgraphs of $\Z^2$ can be made into a $\Z ^2$-invariant measure
	\[
	\nu = \frac{1}{[\Z^2 : \Gamma]} \sum_{i} \mu_i,
	\]
	where $\mu_i$ are translations of $\mu$ with respect to a system of representatives of the cosets of $\Gamma$. This shows that $\Z^2$ has an invariant decomposition into double rays, and in particular, $\Z^2$ contains an invariant spanning copy of $\Z$. 

Next, we show that the Cayley graph of $\Z^d$ with respect to the standard generators admits an invariant spanning double ray. First consider the case $d = 3$.
%To see that the theorem is also true for higher dimensions, first consider the case of $\Z^3$. 
Let $Z_i$ be a copy of the standard Cayley graph of $\Z$ for every $i$. To sample a random spanning double ray $R$ in $X:= Z_1 \times Z_2 \times Z_3$, we first sample a pair of independent spanning double rays $R_{12}$, $R_3$ in $Z_1 \times Z_2$ and $Z_0 \times Z_3$, respectively, each according to the law of the $\Z^2$-invariant spanning double ray we obtained above.\footnote{Alternatively, we could just use the Peano UST curve from \cite{LyonsBook}.}

Let $f$ be one of the two isomorphisms from $Z_0$ to $R_{12}$ mapping $0\in Z_0$ to $0\in Z_1 \times Z_2$, chosen by a fair coin flip. Extend $f$ canonically to a map $\phi: Z_0 \times Z_3 \to R_{12} \times Z_3$ by `multiplying' with $Z_3$. Then $R:= \phi(R_3)$ is a random spanning double ray of $X$.

Clearly, the law of $R$ is invariant under the canonnical action of $\Z$ on $X$ in the $Z_3$ coordinate. We claim that it is also invariant under the canonnical action of $\Z^2$ in the other two coordinates. 

For this, let $r_{12}$ be the (random) labelling of the pairs of vertices $(x,y)$ of $Z_1 \times Z_2$ where $r_{12}(x,y)$ is the distance between $x$ and $y$ along $R_{12}$. Let $B = I_1 \times I_2 \times I_3$, where $I_i$ is a finite subpath of $Z_i$, be a `box' in $X$, to be thought of as a cylinder set of our $\sigma$-algebra. Notice that conditioning on the values of $r_{12}$ inside $I_1 \times I_2$ uniquely determines the distribution of $R \cap B$ by the construction of $R$. As  $R_{12}$ is invariant under the canonnical action of $\Z^2$ on 
$Z_1 \times Z_2$, so is $r_{12}$. Thus the distribution of $R \cap B$ coincides with that of $R \cap gB$ for every $g\in \Z^2$, which completes the proof that $R$ is invariant under the canonical action of $\Z^3$ on $X$.

\medskip
This construction generalises to $\Z^d, d>3$ by induction as follows. Assume we have proved that  the standard Cayley graph of $\Z^{d-1}$ admits both an invariant spanning double ray as well as an invariant spanning copy $H$ of the standard Cayley graph of $\Z^{d-2}$. We repeat the above construction with $X:= Z_1 \times Z_2 \cdots \times Z_d$, with $R_{12}$ as before, but with $R_3$ being a sample of $H$ in $Z_0 \times Z_3 \times Z_4 \times \cdots Z_d$. Repeating the above ideas yields now an invariant spanning double ray in $X$. We now also need an invariant spanning copy of $\Z^{d-1}$ in $X$, to prove our inductive hypothesis, but this is easier: we can just take $R_{12} \times H$.

\medskip
Finally, let $\Gamma$ be an arbitrary finitely generated Abelian group, let $S$ be a finite generating set, and let $G$ be the Cayley graph of $\Gamma$ with generators $S$. Let $S' \subseteq S$ be a maximal set of linearly independent generators with infinite order. Then the subgroup $\Gamma'$ of $\Gamma$ generated by $S'$ has finite index in $\Gamma$, and the Cayley graph $G'$ of $\Gamma'$ with respect to $S'$ is the standard Cayley graph of $\Z^d$ for some $d$.

From now on, view $G'$ as a subgraph of $G$ with the obvious embedding. We will use a \sdr\ in $G'$ to  define a \sdr\ in $G$ as follows. Let $M_1 \cup M_2$ be the decomposition of the edge set of the \sdr\ into two perfect matchings. In order to make the choice canonical, decide by a fair coin flip which of the two sets is $M_1$. It is well known that every Cayley graph of a finite Abelian group admits a Hamilton path, see \cite{MarHamCayley}. From such a path in the Cayley graph of $\Gamma / \Gamma'$ (with generators corresponding to $S$), it is easy to construct a path $P$ in $G$ which starts at $0$ and visits every coset of $\Gamma'$ precisely once. Pick such a path $P$ arbitrarily, let $p$ be the other endpoint of $P$ and let $f_p$ be the automorphism of $G$ mapping $x$ to $x+p$. 

We claim that the subgraph of $G$ consisting of $M_1$, $f_p(M_2)$, and  all translates of $P$ by elements of $\Gamma'$ is the desired \sdr. Since $P$ visits every coset exactly once, every vertex of $G$ lies on a unique translate of $P$. Vertices in $G'$ and $f_p(G')$ are incident to an additional matching edge in $M_1$ and $f_p(M_2)$ respectively, whence $R$ is $2$-regular. To see that it is connected, observe that contracting all edges in translates of $P$ gives the \sdr\ of $G'$ that we started with.

To finish the proof, note that if the \sdr\ in $G'$ was taken from an invariant distribution on \sdr s, then the result of the above construction is invariant under $\Gamma'$. Since $[\Gamma : \Gamma']$ is finite, we can then use the same argument as in the case of $\Z^2$ to turn this into a $\Gamma$-invariant \sdr.

	\section{Proof of Theorem \ref{thm:invariantspanningdoubleray}}
	Let $(\mathcal T_G,\mu)$ be the probability space arising from the implication \ref{itam} $\to$ \ref{itst} of Corollary~\ref{corol} (the condition of unimodularity is not needed for that implication, see the remark after the statement), so that the elements of $\mathcal T_G$ are the spanning trees of $G$, and consider the product probability space 
	\[
	\mathcal T := (\Omega, \Sigma, \mathbb P) = \mathcal T_G \times \prod_{v \in V} \mathcal U_{<_{v}},
	\]
	where  $\mathcal U_{<_{v}}$ denotes the uniform distribution on total orders on the neighbours of $v$ in $G$.
	
	We will construct a map $\phi \colon \Omega \to 2^{E(G^3)}$, mapping each $\omega = (T, (<_{v})_{v \in V}) \in \Omega$ to a spanning double ray of $G^3$. The main idea behind the definition of this map $\phi$ stems from \cite{fleisch}, and is illustrated in Figure~\ref{gcube}. To explain it, let us first pretend that $T$ is a spanning tree of a  finite graph $G$, rooted at a vertex $v$, and we want to use it to obtain a Hamilton cycle of $G^3$. We do this by induction on the size of $T$ as follows. Suppose that for each subtree $T_i$ of $T$ obtained by removing $v$, we have a path $P_i$ in $G^3$ from the root $v_i$ of $T_i$ to one of its neighbours $v_i^\dagger$ in $T_i$ visiting each vertex of $T_i$ exactly once (unless $v_i$ is the unique vertex of $T_i$, in which case $P_i$ is the trivial path). Then we can join the $P_i$ using an edge from $G^3$ as shown in Figure~\ref{gcube}, to obtain a path from $v$ to a neighbour $v^\dagger$, proving the inductive hypothesis. We can the produce a Hamilton cycle of $G^3$ by adding the $v$--$v^\dagger$ edge to that path. 
	
	It is possible to produce such a Hamilton cycle by making local decisions at each vertex of $T$ simultaneously, rather than working recursively: given a fixed ordering of the neighbours of $v$ in Figure~\ref{gcube}, we can decide which additional edges to join the $P_i$ with without seeing the $P_i$. This is the key observation that will allow us to use the same idea in our situation, where $T$ is infinite, with one or two ends. These `local' decisions are encoded in the aforementioned map $\phi$. This is where the orders  $(<_{v})_{v \in V}$ will be used.
	
	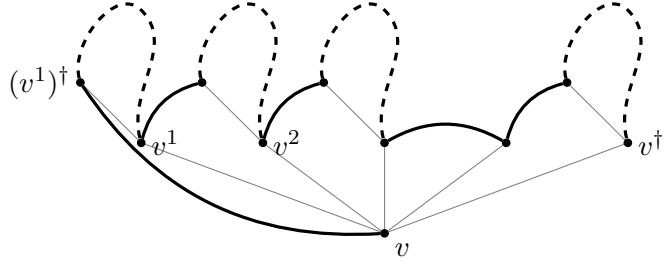
\begin{figure}
		\centering
		\begin{tikzpicture}[scale=.8,vertex/.style={inner sep=1pt,circle,draw,fill}]
		\pgfdeclarelayer{E}
		\pgfdeclarelayer{V}
		\pgfdeclarelayer{labels}
		\pgfsetlayers{E,V,labels}
		\begin{pgfonlayer}{V}	
		\node[vertex] (v) at (0,0){};
		\foreach \i in {-2,...,2}
		\node[vertex] (v\i) at (2*\i,1.5){};
		\foreach \i in {-2,...,0,2}
		\node[vertex] (vd\i) at (2*\i-1,2.5){};
		\end{pgfonlayer}
		\begin{pgfonlayer}{E}	
		\foreach \i in {-2,...,2}
		\path[draw,gray] (v)--(v\i);
		\foreach \i in {-2,...,0,2}
		{
			\path[draw,gray] (v\i)--(vd\i);
			\path[draw,dashed,very thick] (vd\i) to[out=105,in=135] (2*\i,3.7) to[out=-45,in=105] (v\i);
		}
		\path[draw,very thick] (v) to[bend left] (vd-2);
		\path[draw,very thick] (v-2) to[bend left] (vd-1);
		\path[draw,very thick] (v-1) to[bend left] (vd0);
		\path[draw,very thick] (v0) to[bend left] (v1);
		\path[draw,very thick] (v1) to[bend left] (vd2);
		\end{pgfonlayer}
		\begin{pgfonlayer}{labels}
		\node[below right] at (v) {$v$};
		\node[right] at (v-2) {$v^1$};
		\node[right] at (v-1) {$v^2$};
		\node[right] at (v2) {$v^\dagger$};
		\node[left] at (vd-2) {$(v^1)^\dagger$};
		\end{pgfonlayer}
		\end{tikzpicture}
		%    \noindent
		% %   \includegraphics[scale=0.75]{figscv}
		
		% \begin{overpic}[width=.6\linewidth]{GCUBE} \label{gcube}
		% % sphere generated from matlab % sphere; cpmmand
		% %\centering \begin{overpic}[width=.6\linewidth]{regions} 
		% \put(96,40){$v_1^\dagger$}
		% \put(87,28){$v_1$}
		% \put(14,30){$v^\dagger$}
		% \end{overpic}
		%\caption{\small Joining the paths $P_i$ (curved  bold lines) together using edges of $G^3$ (straight bold lines), to produce a Hamilton cycle of the finite graph $G$.}
		\caption{Edges of types \ref{itm:e-up} and \ref{itm:e-across} together with $v^i$--$(v^i)^\dagger$-paths form a $v$--$v^\dagger$-path.}
		\label{gcube}
	\end{figure}
	
	\medskip
	We define $\phi$ as follows. Let $T$ be a 1- or 2-ended spanning tree of $G$. For a vertex $v$, denote by $N^{\infty}_v$ the (one or two) neighbours of $v$ in $T$ that lie in an infinite component of $T-v$, and let $N^{\not \infty}_v$ be the remaining neighbours of $v$ in $T$. Clearly $<_{v}$ gives rise to a total order on $N^{\not \infty}_v$. Denote by $v^{i}$ the $i$-th largest element in this order. If $N^{\not \infty}_v \neq \emptyset$, then denote by $v^\dagger$ the maximal element, otherwise let $v^\dagger = v$. 
	
	Now let $\phi(\omega) \subseteq E(G^3)$ consist of the following edges:
	\begin{enumerate}[label = (\roman*)]
		\item \label{itm:e-up} 
		for $v \in V(G)$ with $N^{\not \infty}_v \neq \emptyset$ the edge connecting $v$ and $(v^1)^\dagger$ 
		\item \label{itm:e-across}
		for $v \in V(G)$ with $N^{\not \infty}_v \neq \emptyset$ and $v^i <_v v^\dagger$ the edge connecting $v^i$ to $(v^{i+1})^\dagger$, and
		\item \label{itm:e-connect}
		for $uv \in E(G)$ where $u \in N^{\infty}_v$ and $v \in N^{\infty}_u$\\
		(This case only materialises when $T$ contains a double ray, and that double ray contains $uv$.)
		\begin{itemize}[label=-]
			\item if $u = \max N^{\infty}_v$ and $v = \max N^{\infty}_u$ the edge $uv$,
			\item if $u = \max N^{\infty}_v$ and $v \neq \max N^{\infty}_u$ the edge $u^\dagger v$,
			\item if $u \neq \max N^{\infty}_v$ and $v \neq \max N^{\infty}_u$ the edge $u^\dagger v^\dagger$,
		\end{itemize}
	\end{enumerate}
	
	As we will prove below (Lemma~\ref{lemphi}), items \ref{itm:e-up} and \ref{itm:e-across} implement the intuition of Figure~\ref{gcube}, while item  \ref{itm:e-connect} ensures that if $R$ is a double ray in $T^3$ (which is unique if it exists), then for each edge $uv$ in $R$, we choose exactly one edge joining the subtree of $T - uv$ rooted at $u$ to the subtree rooted at $v$, and we choose it in such a way to ensure all vertices have degree 2. If $G$ is 1-ended, then we may assume $T$ to be 1-ended too by a result of Timar \cite{TimOne}, in which case item  \ref{itm:e-connect} can be dropped and the proof of Lemma~\ref{lemphi} accordingly simplified.
	
	We would like to interpret the pushforward of $\mathcal T$ via $\phi$ 
	as a measure on subgraphs of $G^3$. In order to do so, it is desirable that cylinder sets (i.e.\ events telling us what happens on finite subgraphs of $G^3$) are measurable. The following proposition says that this is the case.
	
	\begin{lem}
		For every cylinder set $C$ of $2^{E(G^3)}$, the set $\phi^{-1}(C)$ is measurable in $\mathcal T$. %the pushforward of $(\Omega, \Sigma, \mathbb P)$.
	\end{lem}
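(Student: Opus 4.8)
The plan is to show that membership of an edge $e$ of $G^3$ in $\phi(\omega)$ depends only on finitely many coordinates of $\omega = (T, (<_v)_{v\in V})$, in a measurable way; since a cylinder set $C$ of $2^{E(G^3)}$ is determined by the status of finitely many edges, $\phi^{-1}(C)$ will then be a finite Boolean combination of such measurable events and hence measurable. The product $\sigma$-algebra on $\mathcal T$ is generated by the cylinder sets of $\mathcal T_G$ (events depending on whether finitely many fixed edges of $G$ lie in $T$) together with the events depending on the restriction of $<_v$ to finitely many vertices $v$, so it suffices to exhibit, for each edge $e \in E(G^3)$, a countable partition $\Omega = \bigsqcup_k A_k$ into measurable sets on each of which ``$e \in \phi(\omega)$'' is constant.

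First I would fix $e = xy \in E(G^3)$, so $x$ and $y$ are at distance at most $3$ in $G$. Inspecting the definition of $\phi$, the edge $e$ can only be produced through one of the clauses \ref{itm:e-up}, \ref{itm:e-across}, \ref{itm:e-connect} applied at some vertex $v$ with $x, y \in \{v, v^i, (v^j)^\dagger : \dots\}$; since $v^i$ is a $T$-neighbour of $v$ and $(v^i)^\dagger$ is a $T$-neighbour of $v^i$, every vertex playing a role in a clause that could output $e$ lies within $T$-distance $2$ of $x$ (equivalently of $y$), hence within $G$-distance $2$ of $x$. So only finitely many vertices $v$ are relevant (those in the ball $B_G(x,2)$), and for each such $v$ the relevant data are: which neighbours of $v$ in $G$ are neighbours of $v$ in $T$; for each such $T$-neighbour $w$, whether $w \in N^\infty_v$ or $w \in N^{\not\infty}_v$; the order in which $<_v$ arranges $N^{\not\infty}_v$; and the analogous data at the vertices $v^i$, i.e.\ one more ``layer'' out — still a finite set of vertices, all within $B_G(x,2)$ and their $G$-neighbours, so within $B_G(x,3)$.

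The one genuinely non-finitary ingredient is the classification of a $T$-neighbour $w$ of $v$ as lying in an infinite or a finite component of $T-v$: this is the event ``the component of $T - v$ containing $w$ is infinite'', which a priori depends on infinitely many edges. The key point is that this event is still \emph{measurable}: for a fixed finite set $F$ of vertices, ``every vertex of $F$ lies in the component of $T-v$ through $w$'' is a cylinder event (it asserts the existence, for each $f \in F$, of the unique $T$-path from $w$ to $f$ avoiding $v$ — a countable union over candidate finite paths of cylinder events), and ``the component is infinite'' is the countable intersection over an exhausting sequence $F_1 \subseteq F_2 \subseteq \cdots$ of $V$ of the countable unions of such events, hence a Borel set in $\mathcal T_G$. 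I would state this as the crux: define for each ordered pair of $G$-adjacent vertices $(v,w)$ the measurable event $\mathrm{Inf}(v,w) := \{T : vw \in E(T)$ and the component of $T-v$ containing $w$ is infinite$\}$, and check measurability by the exhaustion argument just sketched.

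Having established that, I would finish by writing $\Omega$ as a finite partition according to (a) which of the finitely many potentially-relevant edges of $G$ lie in $T$; (b) for each resulting $T$-neighbour pair $(v,w)$ with $v$ relevant, whether $\mathrm{Inf}(v,w)$ holds; (c) the restriction of each relevant $<_v$ to the (now determined) finite set $N^{\not\infty}_v$ together with the at-most-two element set $N^\infty_v$. Each block of this partition is an intersection of finitely many cylinder events of $\mathcal T_G$, finitely many events $\mathrm{Inf}(v,w)$ or their complements, and finitely many order-restriction events, hence measurable; and on each block the entire local picture around $x$ that $\phi$ reads off in clauses \ref{itm:e-up}--\ref{itm:e-connect} is constant, so ``$e \in \phi(\omega)$'' is constant on the block. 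Therefore $\phi^{-1}(\{$subgraphs containing $e\})$ is a finite union of blocks, hence measurable; intersecting/complementing over the finitely many edges defining $C$ gives that $\phi^{-1}(C)$ is measurable. The main obstacle, and the only place needing care, is the measurability of $\mathrm{Inf}(v,w)$ and the verification that no ``infinitely far away'' information about $T$ influences $e$ beyond that single infinite/finite dichotomy — in particular one must double-check clause \ref{itm:e-connect}, where $v^\dagger$ is used even when $v$ has infinite-component neighbours, to confirm the vertices involved still stay within a bounded $G$-ball of $x$.
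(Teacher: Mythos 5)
Your proposal is correct and follows essentially the same route as the paper: reduce to the single-edge events $[e\in\phi(\omega)]$, observe that the only non-local ingredient is whether a component of $T-v$ is infinite, establish measurability of that event by a countable union/intersection over finite witness sets, and assemble $[e\in\phi(\omega)]$ from finitely many such measurable events together with cylinder events for $E(T)$ and the local orders $<_v$. The only cosmetic difference is that the paper witnesses \emph{finiteness} of the component (a finite vertex set with no outgoing $T$-edge) while you witness \emph{infiniteness} by exhaustion; these are complementary formulations of the same measurability argument.
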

	
	\begin{proof}
		It is sufficient to show that for every edge $e \in E(G^3)$ the event $[e \in \phi(\omega)]$ is measurable. 
		
		First note that the event 
		\[
		[u \in N^{\not \infty}_v] = [uv \in E(T) \text{ and } \exists V_u  \subseteq V(G)\colon u\in V_u,  |V_u| < \infty, \text{no edge leaving $V_u$ is in $T-uv$}]
		\]
		is $\mathcal T$-measurable for any pair $u,v$. Since there are only countably many finite sets containing $u$, this is a countable union of cylinder sets, and thus $\mathcal T$-measurable. Consequently the events $[u \in N^{\infty}_v]$, $[u=v^i]$, $[u=v^\dagger]$, and $[u=\max N^{\infty}_v]$ are measurable because they can be obtained by finite intersections and unions of events of the form $[xy \in E(T)]$,$[x \in N^{\not \infty}_y]$, $[x \notin N^{\not \infty}_y]$ and $[x<_z y]$. Finally, the event $[e \in \phi(\omega)]$ can be obtained by finite intersections and unions of the above events, hence it is measurable in $\mathcal T$.
	\end{proof}

	Theorem \ref{thm:invariantspanningdoubleray} is now an easy consequence of the following lemma.
	
	\begin{lem} \label{lemphi}
		Let $H$ be the random subgraph of $G^3$ defined by the pushforward of $(\Omega, \Sigma, \mathbb P)$ via $\phi$. Then $H$ almost surely is a spanning double ray.
	\end{lem}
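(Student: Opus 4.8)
The plan is to show that $H = \phi(\omega)$ is almost surely (i) spanning, (ii) $2$-regular, and (iii) connected, since a connected $2$-regular spanning subgraph of a countably infinite graph is exactly a spanning double ray. Fix a generic $\omega = (T,(<_v)_{v\in V})$ with $T$ a $1$- or $2$-ended spanning tree of $G$; such $T$ has measure $1$ by the choice of $(\mathcal T_G,\mu)$, and after discarding a further null set we may assume every order $<_v$ is a genuine total order. The spanning property is immediate since $V(H)=V(G)$ by definition. For $2$-regularity I would argue locally at a fixed vertex $w$ and carefully count the edges of $\phi(\omega)$ incident to $w$: $w$ can appear as the vertex "$v$" in item \ref{itm:e-up}, as some "$v^i$" or "$(v^{i+1})^\dagger$" in item \ref{itm:e-across}, as "$(v^1)^\dagger$" in item \ref{itm:e-up}, and in the three sub-cases of item \ref{itm:e-connect}; I would organize the bookkeeping around the parent/children structure of $T$ and the position of $w$ in the various orders $<_v$, using the convention $v^\dagger = v$ when $N^{\not\infty}_v = \emptyset$ to make the finite-subtree picture of Figure~\ref{gcube} work uniformly. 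The combinatorial heart here is precisely the telescoping in Figure~\ref{gcube}: for a vertex $v$ with children subtrees $T_1,\dots,T_k$ in $N^{\not\infty}_v$ (ordered by $<_v$), the edges $v$--$(v^1)^\dagger$ and $v^i$--$(v^{i+1})^\dagger$ splice the would-be paths through the $T_i$ into a single path from $v$ to $v^\dagger$, and one checks each internal vertex gets exactly degree $2$.

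Next I would make the induction of Figure~\ref{gcube} rigorous in the infinite setting. Define, for each vertex $v$ lying in a finite component of $T - N^{\infty}_v$-removal (equivalently, each $v$ from which the "downward" subtree $T^{\downarrow}_v$ is finite), the finite path $P_v$ in $G^3$ on vertex set $V(T^{\downarrow}_v)$ running from $v$ to $v^\dagger$, and prove by induction on $|T^{\downarrow}_v|$ that $P_v$ is exactly the set of edges of $\phi(\omega)$ with both endpoints in $T^{\downarrow}_v$, and that it is a Hamilton path of $(T^{\downarrow}_v)^3$ from $v$ to $v^\dagger$. The base case is a singleton ($P_v$ trivial, $v^\dagger=v$); the inductive step is the splicing described above, invoking the "replace an edge of a cycle/path by a path yields something connected" triviality and checking degrees. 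This handles all the "finite" part of $T$; in the $1$-ended case (where by Timár's theorem we may take $T$ $1$-ended and drop item \ref{itm:e-connect}) the whole of $H$ is then assembled by letting $v$ run along the unique ray, and the nested union $\bigcup_v P_v$ is connected and $2$-regular except that it is a single ray rather than a double ray — so I would need to also note that the ray structure at infinity forces two ends, i.e.\ $H$ is actually a double ray; here the fact that at the "top" vertex $r$ of the ray, $N^{\not\infty}_r$ contributes a path and the single infinite neighbour contributes the other tail is what I would pin down.

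For the $2$-ended case I would use the unique double ray $R_T \subseteq T$ (unique because a $2$-ended tree has a unique double ray). Each edge $uv$ of $R_T$ splits $T$ into two infinite subtrees; write $T_u, T_v$ for the two halves, each of which decomposes into a double-ended "spine" plus finite subtrees hanging off the spine vertices. Item \ref{itm:e-connect} is engineered so that across each spine edge $uv$ exactly one edge of $G^3$ is selected joining $T_u$-side to $T_v$-side, and the three sub-cases (depending on whether $u=\max N^{\infty}_v$ and whether $v = \max N^{\infty}_u$, i.e.\ on the orientation of the orders relative to the double ray at $u$ and at $v$) are exactly what is needed so that, combining with the finite-subtree paths $P_s$ attached along each spine vertex $s$, every vertex ends up with degree $2$ and the whole thing is one connected doubly-infinite path. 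So the key verification is: (a) for each spine vertex $s$, the edges of $\phi(\omega)$ incident to $s$ consist of the two "ends" of the path $P_s$ glued to the two chosen connector edges reaching the neighbouring spine vertices — giving degree exactly $2$; and (b) no cycle is created, i.e.\ $H$ has no finite cycle, which follows because contracting every $P_s$ turns $H$ into (a subdivision of) the double ray $R_T$.

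I expect the main obstacle to be the degree-$2$ and connectivity check at the spine vertices in the $2$-ended case, i.e.\ verifying that item \ref{itm:e-connect}'s three sub-cases interact correctly with item \ref{itm:e-up}/\ref{itm:e-across} so that each spine vertex $s$ receives exactly two $\phi(\omega)$-edges and the pieces link up without closing a cycle or leaving a gap — this is a finite but fiddly case analysis on the relative positions of $u,v$ in the orders $<_u,<_v$ and on which of $N^{\infty}_u, N^{\infty}_v$ equals the maximum. A secondary point requiring care is confirming that the doubly-infinite concatenation $\bigcup_s P_s$ together with the connector edges is connected "all the way to both ends" and exhausts $V(G)$: spanning is clear vertex-wise, but one should note that every vertex of $G$ lies in $T^{\downarrow}_s$ for some spine vertex $s$ (since removing the double ray $R_T$ from $T$ leaves only finite components), so $H = R_T^{(3)}$-type structure covers everything. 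Once $H$ is shown to be a connected $2$-regular spanning subgraph almost surely, it is a spanning double ray, and since $\phi$ is equivariant and $(\Omega,\Sigma,\mathbb P)$ is $\Gamma$-invariant, Theorem~\ref{thm:invariantspanningdoubleray} follows.
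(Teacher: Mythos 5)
Your proposal is correct and follows essentially the same route as the paper: condition on $T$, prove by induction that the edges of $\phi(\omega)$ inside each finite subtree $T^{\not\infty}_v$ form a Hamilton path of that subtree from $v$ to $v^\dagger$, then assemble these paths along the end(s) of $T$, with the type \ref{itm:e-connect} case analysis supplying exactly one connector per spine edge so that $x$ and $x^\dagger$ each reach degree $2$. The only differences are cosmetic (induction on the size rather than the height of $T^{\not\infty}_v$), and your worries about ruling out a single ray or a finite cycle are automatically resolved once spanning, $2$-regularity and connectedness are established, since a connected $2$-regular graph on infinitely many vertices is a double ray.
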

	
	\begin{proof}
		We prove the statement of the proposition for a fixed tree $T$. This is sufficient since $\mathbb P (A) = \int \mathbb P(A \mid T) \, d\mu(T)$ where $\mu$ is the measure on the space $\mathcal T_G$.
		
		For a vertex $v$ denote by $T^{\not\infty}_v$ the subtree of $T$ induced by $v$ and all finite components of $T-v$. Let $h(v)$ the maximum distance from $v$ to a vertex in $T^{\not\infty}_v$. We prove by induction on $h$ that the subgraph of $H$ induced by the vertices of $T^{\not\infty}_v$ is a spanning path from $v$ to $v^\dagger$. If $h(v) = 0$, this is trivially true. For the induction step, note that there can't be an edge of type \ref{itm:e-connect} inside $T^{\not\infty}_v$. Indeed, if $xy$ is such an edge, then there is an edge $e$ separating $x$ and $y$ in $T$ such that neither component of $G - e$ is finite which implies that $x \notin T^{\not\infty}_y$. Now the induction hypothesis together with the definition of edges of type \ref{itm:e-up} and \ref{itm:e-across} prove the claim.
		
		In particular, this claim shows that all vertices of $T^{\not\infty}_v$ lie in the same component of $H$ and that apart from $v$ and $v^\dagger$ all of them have degree $2$. 
		
		If $T$ has one end, then for any vertex $x$ there is a vertex $v$ such that $x \in T^{\not\infty}_v \setminus\{v,v^\dagger\}$, showing that $x$ has degree $2$. Furthermore, for any two vertices $x,y$ there is a vertex $v$ such that $T^{\not\infty}_v$ contains both $x$ and $y$, showing that $H$ is connected.
		
		If $T$ has two ends, then the edges for which both components of $T - e$ are infinite form a double ray. Note that these are precisely the edges $uv$ for which $u \in N^{\infty}_v$ and $v \in N^{\infty}_u$. Let $x$ be a vertex on this double ray, and let $y$ and $z$ be its neighbours. Then the edges of type \ref{itm:e-connect} corresponding to $xy$ and $xz$ connect $T^{\not\infty}_x$ to $T^{\not\infty}_y$ and $T^{\not\infty}_z$ respectively. In particular, this shows that $H$ is connected. Finally note that one of the edges of type \ref{itm:e-connect} attaches to $x$ and the other one attaches to $x^\dagger$ (depending on whether $y <_x z$ or $z <_x y$), thus making sure that both $x$ and $x^\dagger$ have degree $2$ in $H$.
	\end{proof}
	
\section*{Acknowledgements}
We thank Gabor Pete for a discussion that triggered this paper.
	
	\bibliographystyle{plain}
	\bibliography{collective}
\end{document}